    \patchcmd{\section}{\scshape}{\large\bfseries}{}{}
    \renewcommand{\@secnumfont}{\bfseries}
\DeclareMathAlphabet\mathbb{U}{msb}{m}{n}
\def\xyma{\xymatrix@M.7em}
\newtheorem{theorem}{Theorem}[section]
\newtheorem{lemma}[theorem]{Lemma}
\newtheorem{proposition}[theorem]{Proposition}
\theoremstyle{definition}
\def\CC{\mathcal C}
\def\FF{\mathcal F}
\def\GG{\mathcal G}
\def\lim{\sf Lim}
\def\Ab{\sf Ab}
\def\Gr{\sf Gr}
\def\xyma{\xymatrix@M.7em}
\begin{document}

\title{Dimension quotients as boundary limits}

\begin{abstract}
For a functor from the category of free presentations of a group to the category of all groups we define the boundary limit as an image of the natural map from limit to colimit. We show that the fourth dimension quotient of a group can be naturally described as the boundary limit of a simply-defined functor. 
\end{abstract}

\author{Roman Mikhailov}
\address{Saint Petersburg University, 7/9 Universitetskaya nab., St. Petersburg, 199034 Russia} 

\maketitle

\section{Introduction}
Theory of limits was developed in the series of papers \cite{IvanovMikhailov2015}, \cite{IM2017}, \cite{IMP}, \cite{MP2019}. It gives a method or a point of view on construction of functors in algebraic categories. In \cite{ICMTalk}, the limits are mentioned as a part of a program called {\it Speculative theory of functors}.

Recall the main concepts of the theory of limits. Let $G$ be a group. By $\textsf{Pres}(G)$ we denote the \textit{category of presentations} of $G$ with objects being free groups $F$ together with epimorphisms to $G$.
Morphisms are group homomorphisms over $G$. For a functor $\mathcal F:\textsf{Pres}(G)\to \textsf{Ab}$ from the category $\textsf{Pres}(G)$ to the category of abelian groups, one can
consider the (higher) limits ${\sf Lim}^i, \mathcal F,\ i\geq 0,$ over the category of presentations. That is, we fix our group $G$, consider free presentations $R\hookrightarrow F\twoheadrightarrow G$ and make functorial (on $F,R$) constructions $\mathcal F(F,R).$ The limits ${\sf Lim}^i\, \mathcal F(F,R),\ i\geq 0,$ will depend only on $G$. Moreover, the limits give a way to define functors from the category of all groups to abelian groups. 

The category $\textsf{Pres}(G)$ is strongly connected and has pair-wise coproducts. The limit ${\sf Lim} \mathcal F={\sf Lim}^0 \mathcal F$ has the following properties. For any $c\in \textsf{Pres}(G),$
\begin{equation}
{\sf Lim}\ \mathcal F=\{x\in \mathcal F(c)|\forall\,c'\in {\sf Pres}(G), \ \varphi, \psi:c\to c',\ \mathcal F(\varphi)(x)=\mathcal F(\psi)(x)\}.
\end{equation}
Moreover, it is known \cite{MP2019} that the limit of a functor from a strongly connected category with pair-wise coproducts is equal to the equalizer
\begin{equation}\label{equa}{\sf Lim}\ \mathcal F = \textrm{eq}(\mathcal F(c)\rightrightarrows \mathcal F(c\sqcup c))
\end{equation}
for any $c\in \textsf{Pres}(G)$.
 In particular, this equalizer does not depend on $c$. That is, a limit is not just an abstractly defined object, but is the equalizer of a pair of explicitly described maps. One more property of the limit, which makes it intuitively clear, is that ${\sf Lim}\ \mathcal F$ is the largest subgroup of $\mathcal F(c),$ which depends only on $G$. 

 The theory of (nonderived) limits can rewritten without any serious changes to the case of non-commutative values. For example, for functors $\mathcal F: {\sf Pres}(G)\to {\sf Gr}$, from the category of presentations of $G$ to the category of all groups, one can define the limit ${\sf Lim} \mathcal F$ via universal properties and prove the main statements like mentioned above. The limit ${\sf Lim}\ \mathcal F$ is the largest constant subgroup of $\mathcal F(c),$ for any $c\in {\sf Pres}(G).$ However, in the non-commutative case, there is a problem with definition of higher limits ${\sf Lim}^i \mathcal F,\ i\geq 1$. The first derived functor ${\sf Lim}^1 \mathcal F$ can be defined not a a group, but as a pointed set only.

 There is a large collection of examples of limits computed by various methods \cite{IvanovMikhailov2015}, \cite{IM2017}, \cite{IMP}, \cite{MP2019}. In particular, various combinations of derived functors and group homology appear as limits of simply defined functors in ${\sf Pres}(G)$. To illustrate briefly how it works, recall two examples from \cite{IvanovMikhailov2015} and \cite{MP2015}. For a group $H$, the lower central series are defined inductively as follows $$\gamma_1(H):=H,\ \gamma_{n+1}(H):=[\gamma_n(H),H]=\langle [x,y],\ x\in \gamma_n(H), y\in H\rangle^H,\ n\geq 1.$$ There is the following description of the (derived) limits: 
 \begin{align*}
& {\sf Lim}\  \gamma_3(F)/[\gamma_2(R),F]\gamma_4(F)=L_1{\sf S}^3(G_{ab}),\\ 
& {\sf Lim}\  \gamma_2(R)/[\gamma_2(R),F]=H_4(G;\mathbb Z/2)\ \ \ (\text{provided}\ G\ \text{is 2-torsion free}),\\ 
& {\sf Lim}^1\gamma_2(R)/[\gamma_2(R),F]=H_3(G;\mathbb Z/2)\ \ \ (\text{provided}\ G\ \text{is 2-torsion free}).
\end{align*}
Here $L_1{\sf S}^3$ is the first derived functor in the sense of Dold-Puppe of the symmetric cube, $G_{ab}:=G/\gamma_2(G)$ is the abelianization of $G$. 

It is easy to guess why the presented theory of limits belongs to the Speculative theory of functors. The symbols $F$ and $R$ play the role of building blocks or construction parts for design of tricky functors and natural transformations between them. Indeed, one can construct anything depending on $F$ and $R$, take (derived) limits and obtain the functor on $G.$

The theory of (derived) colimits in the categories of presentations is developed in \cite{IMS}. It is shown in that paper that a lot of interesting functors like cyclic homology or certain K-functors can be described via derived colimits. For a presentation $\mathcal F,$ the colimit ${\sf Colim}\ \mathcal F$ is the largest quotient of $\mathcal F(c),$ which depends only on $G$. 

In this paper, we will consider the ${\it boundary\  limits}$ and denote them as ${\sf Blim}.$ By definition, the boundary limit is the image of the limit in colimit, under the natural map, i.e. for a presentation $\mathcal F,$ 
$$
{\sf Blim}\ \mathcal F:={\sf Im}\{{\sf Lim}\ \mathcal F\to {\sf Colim}\ \mathcal F\}. 
$$
That is, for any $c\in {\sf Pres}(G),$ the boundary limit ${\sf Blim}\ \mathcal F$ is the image of the largest subgroup of $\mathcal F(c)$ which depends only on $G$ in the largest quotient of $\mathcal F(c)$ which depends only of $G$. Observe that, in the case of colimits, there is no analog of the formula (\ref{equa}), colimit clearly can be defined as a coequalizer of a set of maps, however, this set, in general, is infinite. 

The dimension subgroups are fundamental objects in the theory of group rings. In this paper, we will study them in the context of limits. Recall the definition. 
Let $\textbf{g}$ be the augmentation ideal of
$\mathbb Z[G]$. The subgroups $D_n(G):= G \cap (1 + \textbf{g}^n),\ n\geq 1$ are known as
\textit{dimension subgroups}. It is easy to see that, for any $G$ and $n\geq 1$, the
dimension subgroups contain the terms of lower central series of $G$:
$\gamma_n(G)\subseteq D_n(G)$.  The quotients $D_n(G)/\gamma_n(G)$ are called the {\it dimension quotients}. The structure of the dimension quotients in general is extremely complicated and, as it is shown recently in \cite{BartholdiMikhailov}, is related to the homotopy groups of spheres. 

In this paper we show (see Proposition \ref{pr11}) that, for every $n,$ there exists a natural monomorphism 
\begin{equation}\label{inclus}
{\sf Blim}\frac{F}{R'\gamma_n(F)}\hookrightarrow
D_n(G)/\gamma_n(G).
\end{equation}
Is it an isomorphism? For the moment we are not able to answer this question for general $n\geq 1.$ Since the dimension quotient may be nontrivial for $n\geq 4,$ the case $n=4$ turns out to be the first interesting step. We prove that, for $n=4$, the above map is a natural isomorphism, i.e. 
\begin{equation}\label{isom1}
{\sf Blim}\frac{F}{R'\gamma_4(F)}=D_4(G)/\gamma_4(G). 
\end{equation}
Observe that, this result follows from the results of the paper \cite{MP2019}, however the proofs from \cite{MP2019} are non-natural and based on a choose of basis in free groups and a routine analysis of forms of elements in free group rings, it is very complicated to generalize the arguments from \cite{MP2019} for higher $n$. In this paper we give a completely {\it functorial} proof of (\ref{isom1}). 

The fourth dimension quotient $D_4(G)/\gamma_4(G)$ is extremely complicated functor, as an abelian group it always has exponent 2, however, it's structure has relation to homotopy groups, in particular, the 2-torsion which appears in the fourth dimension quotient of some known examples is related to the homotopy group $\pi_4(S^2)=\mathbb Z/2$ (see \cite{BartholdiMikhailov}). Observe also that, the results of this paper can be easily reformulated for the case of Lie rings. In the Lie case, the dimension quotients are related to the derived functors of Lie functors (see \cite{BartholdiMikhailov}). 

It is natural to make a conjecture that the inclusion (\ref{inclus}) is an isomorphism for all $n$. What about boundary limits of the type ${\sf Blim} \frac{F}{\gamma_k(R)\gamma_n(F)},\ k<n?$ These functors may have a complicated nature, and be related to homotopy theory. 

The author thanks Sergei Ivanov for writting the Appendix containing an introduction to the theory of non-abelian ${\sf Lim}^1,$ which is needed for the results of this paper.

\section{Formulations and proofs}
For a free group $F,$ its normal subgroup $R$ and a quotient $G=F/R,$ denote by $\bf f$ the augmentation ideal of $\mathbb Z[F],$ and ${\bf r}:=(R-1)\mathbb Z[F].$ 

\begin{proposition}\label{pr11}
There exist the following natural exact sequence
\begin{multline*}
{\sf Blim}\ \frac{F}{R'\gamma_n(F)}\hookrightarrow{}D_n(G)/\gamma_n(G)\to {\sf Lim}^1\frac{R\cap (1+{\bf rf}+{\bf f}^n)}{R'(R\cap \gamma_n(F))}\twoheadrightarrow\\ 
{\sf Lim}^1\frac{F\cap (1+{\bf rf}+{\bf f}^n)}{R'\gamma_n(F)}
\end{multline*}
where the last surjective map is a map between pointed sets. 
\end{proposition}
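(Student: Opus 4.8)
The plan is to exhibit the four‑term sequence as a fragment of the six‑term exact sequence produced, via the non‑abelian ${\sf Lim}^1$‑calculus of the Appendix, from a natural short exact sequence of functors on ${\sf Pres}(G)$
\[
1\longrightarrow \frac{R\cap(1+{\bf rf}+{\bf f}^n)}{R'(R\cap\gamma_n(F))}\longrightarrow \frac{F\cap(1+{\bf rf}+{\bf f}^n)}{R'\gamma_n(F)}\longrightarrow \frac{D_n(G)}{\gamma_n(G)}\longrightarrow 1,
\]
whose third term is a constant functor. First I would set this sequence up from classical facts about free group rings. Here ${\bf rf}:={\bf r}{\bf f}$ is a two‑sided ideal, and since $[r,s]-1\in{\bf rf}$ for $r,s\in R$ one has $R'\gamma_n(F)\subseteq F\cap(1+{\bf rf}+{\bf f}^n)$, so all the quotients above make sense and are normal in $F$. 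Using $\mathbb{Z}[G]=\mathbb{Z}[F]/{\bf r}$ one gets $D_n(G)/\gamma_n(G)=\bigl(F\cap(1+{\bf r}+{\bf f}^n)\bigr)/R\gamma_n(F)$, and the crucial reduction is
\[
F\cap(1+{\bf r}+{\bf f}^n)=R\cdot\bigl(F\cap(1+{\bf rf}+{\bf f}^n)\bigr),
\]
which follows from surjectivity of $R\to{\bf r}/{\bf rf}$, $r\mapsto r-1$ (if $g-1=\rho+\varphi\in{\bf r}+{\bf f}^n$, pick $r\in R$ with $\rho\equiv r-1 \pmod{{\bf rf}}$; then $r^{-1}g-1\in{\bf rf}+{\bf f}^n$). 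Feeding this into the second isomorphism theorem, together with $R\cap R'\gamma_n(F)=R'(R\cap\gamma_n(F))$ and $R\gamma_n(F)\cap(1+{\bf rf}+{\bf f}^n)=\bigl(R\cap(1+{\bf rf}+{\bf f}^n)\bigr)R'\gamma_n(F)$, gives the displayed short exact sequence with third term $D_n(G)/\gamma_n(G)$.

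The second ingredient is the computation ${\sf Colim}\,\frac{F}{R'\gamma_n(F)}=G/\gamma_n(G)$, with structure map the canonical projection $p$. One direction is clear from the natural transformation $\frac{F}{R'\gamma_n(F)}\twoheadrightarrow G/\gamma_n(G)$ to a constant functor. For the other, let $\widetilde c$ be the presentation of $G$ with free group $F\ast F(X')$, where $X'=\{x_i'\}$ is a free copy of a basis $X=\{x_i\}$ of $F$ with $x_i'\mapsto1$, and consider the morphisms $\alpha,\beta\colon c\to\widetilde c$, $\alpha(x_i)=x_i$, $\beta(x_i)=x_ix_i'$, and the morphisms $\pi\colon\widetilde c\to c$, $x_i\mapsto x_i$, $x_i'\mapsto\rho_i$ for arbitrary $\rho_i\in R$, so that $\pi\alpha=\mathrm{id}_c$. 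Composing the colimit cocone shows that the structure map out of $\frac{F}{R'\gamma_n(F)}$ is invariant under each endomorphism $\pi\beta\colon x_i\mapsto x_i\rho_i$; taking $(\rho_i)$ supported at one index shows it kills every element of $R$, hence all of $R\gamma_n(F)/R'\gamma_n(F)$. Thus ${\sf Colim}$ is a quotient of $G/\gamma_n(G)$, and the two maps combine to the asserted isomorphism; in particular ${\sf Blim}\,\frac{F}{R'\gamma_n(F)}=p\bigl({\sf Lim}\,\frac{F}{R'\gamma_n(F)}\bigr)$.

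The heart of the argument, and the step I expect to be the main obstacle, is the identity
\[
{\sf Lim}\,\frac{F}{R'\gamma_n(F)}={\sf Lim}\,\frac{F\cap(1+{\bf rf}+{\bf f}^n)}{R'\gamma_n(F)},
\]
equivalently that any $\tilde\xi\in F$ representing a constant element of $F/R'\gamma_n(F)$ satisfies $\tilde\xi-1\in{\bf rf}+{\bf f}^n$. By the equalizer description of ${\sf Lim}$ over ${\sf Pres}(G)$, this is the analysis of the single relation $i_1(\tilde\xi)^{-1}i_2(\tilde\xi)\in S'\gamma_n(F\ast F)$ arising from the two canonical morphisms $i_1,i_2\colon c\to c\sqcup c$, where $S$ is the relation subgroup of $c\sqcup c$ (the kernel of the fold map $F\ast F\twoheadrightarrow G$). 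Applying the maps $c\sqcup c\to c$ that restrict along $i_1$ to the identity only recovers the weaker statement that $\psi(\tilde\xi)\equiv\tilde\xi$ in $F/R'\gamma_n(F)$ for every endomorphism $\psi$ of $c$ over $G$, which is not by itself enough; the full strength of the membership in $S'\gamma_n(F\ast F)$ must be used. I would extract it by working in the Magnus embedding of $\mathbb{Z}[F\ast F]$: reduce modulo $\gamma_n(F\ast F)$ to dispose of the ${\bf f}^n$‑layer, then analyse $[S,S]$ there in terms of $R$ and transport the resulting membership back into ${\bf rf}+{\bf f}^n$ inside $\mathbb{Z}[F]$. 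This is precisely where the choice‑of‑basis computations of \cite{MP2019} would otherwise enter, and making it functorial is the crux.

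Granting these three steps, the conclusion is formal. Since $D_n(G)/\gamma_n(G)$ depends only on $G$, its first (derived) limit vanishes, so the six‑term exact sequence of the short exact sequence of the first paragraph — write $\mathcal A$ and $\mathcal B$ for its first two functors — collapses to
\[
1\to{\sf Lim}\,\mathcal A\to{\sf Lim}\,\mathcal B\xrightarrow{\,q_*\,}D_n(G)/\gamma_n(G)\xrightarrow{\,\delta\,}{\sf Lim}^1\mathcal A\to{\sf Lim}^1\mathcal B\to1,
\]
the last map a surjection of pointed sets (and ${\sf Lim}^1\mathcal A$ a group, as $\mathcal A$ is abelian, being a quotient of $R_{ab}$). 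The map $q\colon\mathcal B\to D_n(G)/\gamma_n(G)$ is the restriction of $p$, so exactness at ${\sf Lim}\,\mathcal B$ together with the identity of the previous paragraph gives ${\sf Im}(q_*)=p({\sf Lim}\,\mathcal B)=p\bigl({\sf Lim}\,\frac{F}{R'\gamma_n(F)}\bigr)={\sf Blim}\,\frac{F}{R'\gamma_n(F)}$, which is therefore contained in $D_n(G)/\gamma_n(G)$. Splicing ${\sf Im}(q_*)=\ker\delta$ with the tail ${\sf Lim}^1\mathcal A\twoheadrightarrow{\sf Lim}^1\mathcal B$ yields exactly the stated four‑term exact sequence, all of whose maps are natural by construction.
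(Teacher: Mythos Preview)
Your overall architecture matches the paper's exactly: the same short exact sequence of functors with constant quotient $D_n(G)/\gamma_n(G)$, the same six-term ${\sf Lim}/{\sf Lim}^1$ sequence coming from the Appendix, the same identification ${\sf Colim}\,\frac{F}{R'\gamma_n(F)}=G/\gamma_n(G)$, and the same final splice. The one genuine gap is precisely the step you yourself flag as ``the main obstacle'': the identity
\[
{\sf Lim}\,\frac{F}{R'\gamma_n(F)}={\sf Lim}\,\frac{F\cap(1+{\bf rf}+{\bf f}^n)}{R'\gamma_n(F)}.
\]
You propose to extract it from the equalizer description via the Magnus embedding of $\mathbb Z[F\ast F]$ and an analysis of $S'\gamma_n(F\ast F)$, but you do not carry this out, and as you note this is exactly the kind of basis-dependent computation one wants to avoid.

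The paper's argument for this step is short and entirely functorial, and it is the idea you are missing. One uses the natural inclusion
\[
\frac{F}{F\cap(1+{\bf rf}+{\bf f}^n)}\hookrightarrow \frac{\bf f}{{\bf rf}+{\bf f}^n},\qquad f\mapsto f-1,
\]
and observes that the target is $G_{ab}\otimes {\bf f}/{\bf f}^{n-1}$, a \emph{monoadditive} functor on ${\sf Pres}(G)$; by the standard vanishing theorem for such functors one has ${\sf Lim}\,\frac{\bf f}{{\bf rf}+{\bf f}^n}=0$. Left exactness of ${\sf Lim}$ then forces ${\sf Lim}\,\frac{F}{F\cap(1+{\bf rf}+{\bf f}^n)}=0$, and applying ${\sf Lim}$ to
\[
\frac{F\cap(1+{\bf rf}+{\bf f}^n)}{R'\gamma_n(F)}\hookrightarrow \frac{F}{R'\gamma_n(F)}\twoheadrightarrow \frac{F}{F\cap(1+{\bf rf}+{\bf f}^n)}
\]
gives the desired equality (and its $R$-analogue) at once. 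No analysis in $F\ast F$ is needed.
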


\begin{proof}
Observe that (see, for example, page 80 in \cite{Gupta:1987}), for all $n\geq 1$, there exists the following natural short exact sequence
\begin{equation}\label{eqr1}
\frac{R\cap (1+{\bf rf}+{\bf f}^n)}{\gamma_2(R)(R\cap\gamma_n(F))}\hookrightarrow  \frac{F\cap (1+{\bf rf}+{\bf f}^n)}{\gamma_2(R)\gamma_n(F)}\twoheadrightarrow \frac{D_n(G)}{\gamma_n(G)}
\end{equation}
Using the description of the dimension quotient $\frac{D_n(G)}{\gamma_n(G)}=\frac{F\cap (1+{\bf r}+{\bf f}^n)}{R\gamma_n(F)},$
we see that the sequence (\ref{eqr1}) is central. Therefore, by Lemma \ref{central}, there is the following natural exact sequence 
\begin{multline}\label{mult1}
{\sf Lim}\frac{R\cap (1+{\bf rf}+{\bf f}^n)}{\gamma_2(R)(R\cap\gamma_n(F))}\hookrightarrow  {\sf Lim}\frac{F\cap (1+{\bf rf}+{\bf f}^n)}{\gamma_2(R)\gamma_n(F)}\to  \frac{D_n(G)}{\gamma_n(G)}\to\\ 
{\sf Lim}^1\frac{R\cap (1+{\bf rf}+{\bf f}^n)}{\gamma_2(R)(R\cap\gamma_n(F))}\twoheadrightarrow  {\sf Lim}^1\frac{F\cap (1+{\bf rf}+{\bf f}^n)}{\gamma_2(R)\gamma_n(F)}
\end{multline}
where the last arrow is a surjective map of pointed sets. We use an elementary property that ${\sf Lim}^1$ of a constant functor (i.e. which depends only on $G$) is trivial. 

Now observe that, there is a natural inclusion
$$
\frac{F}{F\cap (1+{\bf rf}+{\bf f}^n)}\subset \frac{\bf f}{{\bf rf}+{\bf f}^n}$$ induced by the map $f\mapsto f-1\in {\bf f},\ f\in F$. The functor $\frac{\bf f}{{\bf rf}+{\bf f}^n}=G_{ab}\otimes {\bf f}/{\bf f}^{n-1}$ is monoadditive and 
$$
{\sf Lim} \frac{\bf f}{{\bf rf}+{\bf f}^n}=0. 
$$
(Moverover, all higher limits of this presentation are zero as well, see \cite{IvanovMikhailov2015} for discussion of such examples.) Since the limit is left exact, we conclude that, there are natural isomorphisms 
\begin{align*}
& {\sf Lim} \frac{R\cap (1+{\bf rf}+{\bf f}^n)}{\gamma_2(R)(R\cap\gamma_n(F))}={\sf Lim}\frac{R}{R'\gamma_n(F)}\\ 
& {\sf Lim}\frac{F\cap (1+{\bf rf}+{\bf f}^n)}{\gamma_2(R)\gamma_n(F)}={\sf Lim}\frac{F}{R'\gamma_n(F)}. 
\end{align*}
Now observe that $${\sf Colim}\frac{F}{R'\gamma_n(F)}=G/\gamma_n(G)$$
and that 
$$
\frac{R}{R'(R\cap\gamma_n(F))}\cap {\sf Lim}\frac{F}{R'\gamma_n(F)}={\sf Lim}\frac{R}{R'(R\cap \gamma_n(F))}
$$
where all limits are considered as subgroups of $F/R'\gamma_n(F).$ Therefore, there is the natural short exact sequence 
$$
{\sf Lim}\frac{R}{R'(R\cap\gamma_n(F))}\hookrightarrow {\sf Lim}\frac{F}{R'\gamma_n(F)}\twoheadrightarrow {\sf Blim} \frac{F}{R'\gamma_n(F)}
$$
and the needed statement follows from the exact sequence
(\ref{mult1}). \end{proof}

\begin{proposition}\label{inj}
There is the following natural short exact sequence 
$$
\frac{\gamma_3(F)}{[R,F']\gamma_4(F)}\hookrightarrow {\sf S}^2(G_{ab})\otimes F_{ab}\twoheadrightarrow S^3(G_{ab}). 
$$
Here ${\sf S}^*$ are symmetric powers. 
\end{proposition}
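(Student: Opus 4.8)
The plan is to produce the two arrows as natural transformations of functors of the pair $(F,R)$ and then verify exactness term by term; the only substantial point will be left-exactness.

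For the surjection I take $1\otimes(F_{ab}\twoheadrightarrow G_{ab})$ followed by the multiplication ${\sf S}^2(G_{ab})\otimes G_{ab}\to {\sf S}^3(G_{ab})$, which is clearly onto. For the left-hand term I identify $\gamma_3(F)/\gamma_4(F)$ with the degree $3$ part $L_3(F_{ab})$ of the free Lie ring over $F_{ab}$; under this identification $[R,F']\gamma_4(F)/\gamma_4(F)$ is the image $N$ of the Lie bracket $\Lambda^2(F_{ab})\otimes\overline R\to L_3(F_{ab})$, where $\overline R=RF'/F'\subseteq F_{ab}$, so the left-hand term equals $L_3(F_{ab})/N$. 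The map into the middle term is the one induced by the natural composite $L_3(F_{ab})\hookrightarrow F_{ab}^{\otimes 3}\twoheadrightarrow {\sf S}^2(G_{ab})\otimes F_{ab}$ (reduce and symmetrise the first two tensor slots into ${\sf S}^2(G_{ab})$, keep the third in $F_{ab}$), explicitly $[[x,y],z]\mapsto(\overline z\,\overline y)\otimes x-(\overline z\,\overline x)\otimes y$. Using the presentation $\Lambda^2(F_{ab})\otimes F_{ab}\twoheadrightarrow L_3(F_{ab})$ and the symmetry of ${\sf S}^2$ and ${\sf S}^3$ one checks that this is well defined on $L_3(F_{ab})$, kills $N$ (since $\overline r=0$ in $G_{ab}$), and composes to zero into ${\sf S}^3(G_{ab})$.

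For surjectivity onto $\ker({\sf S}^2(G_{ab})\otimes F_{ab}\to {\sf S}^3(G_{ab}))$ I record first that, for every abelian group $A$, $\ker({\sf S}^2(A)\otimes A\to {\sf S}^3(A))$ is the image of the Koszul map $\delta\colon\Lambda^2(A)\otimes A\to{\sf S}^2(A)\otimes A$ — this is just the degree $3$ relation of the symmetric algebra. Given $\xi$ in the kernel, I push it to ${\sf S}^2(G_{ab})\otimes G_{ab}$, write its image as $\delta(\eta)$ with $\eta\in\Lambda^2(G_{ab})\otimes G_{ab}$, lift $\eta$ along $\Lambda^2(F_{ab})\otimes F_{ab}\twoheadrightarrow\Lambda^2(G_{ab})\otimes G_{ab}$, apply the bracket to reach $L_3(F_{ab})$, and subtract the value of our map; the remainder lies in ${\sf S}^2(G_{ab})\otimes\overline R$, and each generator $(\overline a\,\overline b)\otimes\overline r$ of the latter is, up to sign, the value of our map on the class of $[[a',r],b']$ for any $a',b'\in F$ mapping to $a,b$. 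Hence $\xi$ is in the image.

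Left-exactness is the crux. I would obtain it from the snake lemma applied to the commutative diagram with exact rows
\[
\begin{array}{ccccccccc}
0&\to& J\otimes F_{ab}&\to& {\sf S}^2(F_{ab})\otimes F_{ab}&\to& {\sf S}^2(G_{ab})\otimes F_{ab}&\to&0\\
&& \downarrow&& \downarrow&& \downarrow&&\\
0&\to& K&\to& {\sf S}^3(F_{ab})&\to& {\sf S}^3(G_{ab})&\to&0
\end{array}
\]
where $J=\ker({\sf S}^2(F_{ab})\to{\sf S}^2(G_{ab}))$, $K=\ker({\sf S}^3(F_{ab})\to{\sf S}^3(G_{ab}))$, the verticals are the three multiplication maps (the right one being the surjection of the Proposition), and the top row is exact because $F_{ab}$ is free, hence flat. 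The middle vertical and its restriction $J\otimes F_{ab}\to K$ are both surjective, so the snake sequence collapses to $0\to\ker(J\otimes F_{ab}\to K)\to\ker({\sf S}^2(F_{ab})\otimes F_{ab}\to{\sf S}^3(F_{ab}))\to\ker({\sf S}^2(G_{ab})\otimes F_{ab}\to{\sf S}^3(G_{ab}))\to 0$. Since $F_{ab}$ is free, the classical exact sequence $0\to\Lambda^3(F_{ab})\to\Lambda^2(F_{ab})\otimes F_{ab}\to{\sf S}^2(F_{ab})\otimes F_{ab}\to{\sf S}^3(F_{ab})\to 0$ (which one recognises as the case $R=F'$ of the statement being proved) identifies the middle kernel with $L_3(F_{ab})$, and it remains to show that the left kernel corresponds to $N$ — equivalently, that $L_3(F_{ab})\cap M=N$ inside $F_{ab}^{\otimes 3}$, where $M=\ker(F_{ab}^{\otimes 3}\twoheadrightarrow{\sf S}^2(G_{ab})\otimes F_{ab})$. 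The inclusion $N\subseteq L_3(F_{ab})\cap M$ is immediate from the definitions; the reverse inclusion is the real obstacle, and because $G_{ab}$ may carry torsion there is no rank comparison over $\mathbb{Q}$ to lean on. I would establish it by reducing to $F_{ab}$ finitely generated (every functor in sight commutes with filtered colimits and the relevant transition maps remain injective), choosing a basis of $F_{ab}$ adapted to $\overline R$ by Smith normal form, and splitting everything by multidegree: each multidegree involves at most three basis vectors, and in each case the asserted equality of two subgroups of a free abelian group of rank at most six is a bounded, explicit linear-algebra verification.
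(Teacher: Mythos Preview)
Your definition of the two arrows and the verification of middle-exactness are essentially the paper's: the formula $[[x,y],z]\mapsto \bar z\,\bar y\otimes x-\bar z\,\bar x\otimes y$ agrees (up to sign) with the paper's, and your argument that every element of ${\sf S}^2(G_{ab})\otimes\overline R$ is hit by a bracket $[[a',r],b']$ is exactly the paper's.

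Where you diverge is in the proof of injectivity, and here there is a gap. Your snake-lemma diagram with rows built from $S^2(F_{ab})\otimes F_{ab}$ and $S^3(F_{ab})$ is correct, but it does not reduce the problem: the resulting identification $\ker\big(L_3(F_{ab})\to\text{right kernel}\big)\cong\ker(J\otimes F_{ab}\to K)$ is just a restatement of what you want, since the right kernel embeds in $S^2(G_{ab})\otimes F_{ab}$. You are left with $L_3(F_{ab})\cap M=N$, which is equivalent to the injectivity itself, and for this you only sketch a Smith-normal-form/multidegree computation. That computation is not carried out, and the torsion in $G_{ab}$ makes the case analysis (even restricted to three basis vectors) genuinely delicate; as written it is an outline, not a proof.

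The paper's route is different and avoids any basis choice. Instead of comparing with $S^2(F_{ab})\otimes F_{ab}$, it compares with the quotient $S^2(G_{ab})\otimes G_{ab}$, using the known short exact sequence $L^3(G_{ab})\hookrightarrow S^2(G_{ab})\otimes G_{ab}\twoheadrightarrow S^3(G_{ab})$ and the identification $L^3(G_{ab})=\gamma_3(F)/[[R,F],F]\gamma_4(F)$. The vertical kernels in the resulting $3\times 3$ diagram are $\dfrac{[[R,F],F]\gamma_4(F)}{[R,F']\gamma_4(F)}$ and $\dfrac{S^2(G_{ab})\otimes R/(R\cap F')}{{\rm Tor}(S^2(G_{ab}),G_{ab})}$, and the paper shows these are isomorphic by writing down mutually inverse natural maps $ab\otimes r\leftrightarrow [[r,a'],b']$. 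This is a two-line functorial check rather than a case-by-case linear-algebra verification, and it is precisely what makes the argument usable later in the paper.
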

\begin{proof}
We have the following diagram
$$
\xyma{\frac{{\sf S}^2(G_{ab})\otimes R/(R\cap F')}{{\sf Tor}({\sf S}^2(G_{ab}), G_{ab})}\ar@{>->}[d]\ar@{=}[r] & \frac{{\sf S}^2(G_{ab})\otimes R/(R\cap F')}{{\sf Tor}({\sf S}^2(G_{ab}), G_{ab})}\ar@{>->}[d]\\ 
Ker\ar@{>->}[r] \ar@{->>}[d]& {\sf S}^2(G_{ab})\otimes F_{ab}\ar@{->>}[r] \ar@{->>}[d] & {\sf S}^3(G_{ab})\ar@{=}[d]\\ 
{\sf L}^3(G_{ab})\ar@{>->}[r] & {\sf S}^2(G_{ab})\otimes G_{ab}\ar@{->>}[r] & {\sf S}^3(G_{ab}).
}
$$
where ${\sf L}^3$ is the third Lie cube. The map 
$$
\frac{\gamma_3(F)}{[R,F']\gamma_4(F)}\to Ker
$$
is given as 
$$
[[a,b],c]\mapsto \bar a\bar c\otimes b-\bar b\bar c\otimes a,\ a,b,c\in F. 
$$
It is east to check that the map is well-defined. Recall that there is a natural descrition of the Lie functor
$$
{\sf L}^3(G_{ab})=\frac{\gamma_3(F)}{[[R,F],F]\gamma_4(F)}. 
$$
We obtain a natural short exact sequence 
\begin{equation}\label{ker}
\frac{[[R,F],F]\gamma_4(F)}{[R,F']\gamma_4(F)}\hookrightarrow \frac{\gamma_3(F)}{[R,F']\gamma_4(F)}\twoheadrightarrow {\sf L}^3(G_{ab}). 
\end{equation}
The map 
$$
\frac{[[R,F],F]\gamma_4(F)}{[R,F']\gamma_4(F)}\to \frac{{\sf S}^2(G_{ab})\otimes R/(R\cap F')}{{\sf Tor}({\sf S}^2(G_{ab}), G_{ab})}
$$
is an epimorphism, since every term $ab\otimes r,\ a,b\in G_{ab},\ r\in R$ is an image of an element $[[r,a'],b'],$ where $a', b'$ are preimages of $a,b$ in $F.$ Now observe that there exists a natural inverse surjective map 
$$
\frac{{\sf S}^2(G_{ab})\otimes R/(R\cap F')}{{\sf Tor}({\sf S}^2(G_{ab}), G_{ab})}\to 
\frac{[[R,F],F]\gamma_4(F)}{[R,F']\gamma_4(F)}, 
$$
defined as 
$$
ab\otimes r\mapsto [[r,a'],b'],
$$
where $a,b,a',b',r$ are as above. It is easy to check that it is well-defined, therefore, the map (\ref{ker}) is an isomorphsim. 
\end{proof}

Using elementary properties of higher limits (see \cite{IvanovMikhailov2015}), we conclude 
\begin{align*}
& {\sf Lim}^i\ \frac{\gamma_3(F)}{[R,F']\gamma_4(F)}=0,\ i\neq 1\\ 
& {\sf Lim}^1\ \frac{\gamma_3(F)}{[R,F']\gamma_4(F)}={\sf S}^3(G_{ab}),\ i\neq 1. 
\end{align*}
Observe also that there exist the following exact sequence
$$
{\sf Tor}({\sf S^2(G_{ab})},G_{ab})\hookrightarrow {\sf S}^2(G_{ab})\otimes R/(R\cap F')\to \frac{\gamma_3(F)}{[R,F']\gamma_4(F)}\twoheadrightarrow {\sf L}^3(G_{ab}). 
$$

Now we formulate our main result
\begin{theorem}
There is a natural isormorphism
$$
{\sf Blim}\frac{F}{R'\gamma_4(F)}=D_4(G)/\gamma_4(G). 
$$
\end{theorem}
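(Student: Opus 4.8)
The plan is to specialise Proposition~\ref{pr11} to $n=4$ and to show that the connecting map into the first ${\sf Lim}^1$-term vanishes; by the monomorphism~(\ref{inclus}) this is precisely what is needed. Write $\mathcal A=\dfrac{F\cap(1+{\bf rf}+{\bf f}^4)}{R'\gamma_4(F)}$ and $\mathcal B=\dfrac{R\cap(1+{\bf rf}+{\bf f}^4)}{R'(R\cap\gamma_4(F))}$, so that Proposition~\ref{pr11} gives a natural exact sequence
$$
{\sf Blim}\,\tfrac{F}{R'\gamma_4(F)}\hookrightarrow D_4(G)/\gamma_4(G)\xrightarrow{\ \delta\ }{\sf Lim}^1\mathcal B\twoheadrightarrow{\sf Lim}^1\mathcal A .
$$
A first, purely formal simplification peculiar to $n=4$: if $f-1\in{\bf rf}+{\bf f}^4$ then $f-1\in{\bf f}^2$, hence $f\in\gamma_2(F)$; since $[\gamma_2(F),\gamma_2(F)]\subseteq\gamma_4(F)$, both $\mathcal A$ and $\mathcal B$ are subquotients of the \emph{abelian} group $\gamma_2(F)/\gamma_4(F)$, so the ${\sf Lim}^1$-terms above are genuine abelian groups and the surjection in Proposition~\ref{pr11} is a homomorphism of abelian groups. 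It therefore suffices to prove that $\delta=0$, equivalently that ${\sf Lim}^1\mathcal B\to{\sf Lim}^1\mathcal A$ is injective.

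The heart of the argument is the computation of these two higher limits, for which I would use Proposition~\ref{inj} and its consequences, where it is shown that $\frac{\gamma_3(F)}{[R,F']\gamma_4(F)}$ has ${\sf Lim}^i=0$ for $i\neq1$ and ${\sf Lim}^1={\sf S}^3(G_{ab})$. Using that, for $f\in\gamma_2(F)$, the coset $f\gamma_4(F)$ is detected by $f-1$ modulo ${\bf f}^4$ — because $\gamma_2(F)/\gamma_4(F)$ embeds in ${\bf f}^2/{\bf f}^4$ — one identifies $\frac{F\cap(1+{\bf rf}+{\bf f}^4)}{\gamma_4(F)}$, and likewise $\frac{R\cap(1+{\bf rf}+{\bf f}^4)}{R\cap\gamma_4(F)}$, with the corresponding intersections, inside ${\bf f}^2/{\bf f}^4$, of $\gamma_2(F)/\gamma_4(F)$ with the image of ${\bf rf}$, and then $\mathcal A$, $\mathcal B$ as their further quotients by the image of $R'$. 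Unwinding these in terms of commutator subgroups — the step that in \cite{MP2019} is carried out by fixing a basis and analysing normal forms in the free group ring — I expect natural short exact sequences in which $\mathcal A$ (respectively $\mathcal B$) is assembled from a single copy of $\frac{\gamma_3(F)}{[R,F']\gamma_4(F)}$, which carries all the higher limits, together with functors that are either constant (i.e.\ depend only on $G$) or monoadditive, and hence have vanishing limits in positive degrees. Feeding these into the long exact sequences of higher limits then gives natural isomorphisms ${\sf Lim}^1\mathcal A\cong{\sf S}^3(G_{ab})\cong{\sf Lim}^1\mathcal B$ under which the inclusion $\mathcal B\hookrightarrow\mathcal A$ induces the identity of ${\sf S}^3(G_{ab})$; in particular ${\sf Lim}^1\mathcal B\to{\sf Lim}^1\mathcal A$ is injective.

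Granting this, $\delta=0$, and the long exact sequence of the central extension $\mathcal B\hookrightarrow\mathcal A\twoheadrightarrow D_4(G)/\gamma_4(G)$ (Lemma~\ref{central}) shows that ${\sf Lim}\,\mathcal A={\sf Lim}\,\tfrac{F}{R'\gamma_4(F)}$ surjects onto $D_4(G)/\gamma_4(G)$ with kernel ${\sf Lim}\,\mathcal B={\sf Lim}\,\tfrac{R}{R'(R\cap\gamma_4(F))}$; combining this with the natural short exact sequence ${\sf Lim}\,\tfrac{R}{R'(R\cap\gamma_4(F))}\hookrightarrow{\sf Lim}\,\tfrac{F}{R'\gamma_4(F)}\twoheadrightarrow{\sf Blim}\,\tfrac{F}{R'\gamma_4(F)}$ established in the proof of Proposition~\ref{pr11} yields the claimed identity ${\sf Blim}\,\tfrac{F}{R'\gamma_4(F)}=D_4(G)/\gamma_4(G)$. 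The main obstacle is exactly this middle step: making the identification of $F\cap(1+{\bf rf}+{\bf f}^4)$ and $R\cap(1+{\bf rf}+{\bf f}^4)$, modulo the relevant products of terms of lower central series, both explicit \emph{and} natural, and checking that the comparison maps to $\frac{\gamma_3(F)}{[R,F']\gamma_4(F)}$ are well defined. The advantage of this route is that Proposition~\ref{inj} has already isolated the only functor with nontrivial higher limits, so that everything else is forced to contribute nothing and the final deduction becomes a formal diagram chase.
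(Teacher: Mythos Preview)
Your reduction via Proposition~\ref{pr11} to showing $\delta=0$ is exactly how the paper starts, and the observation that $\mathcal A$ and $\mathcal B$ are abelian (as subquotients of $\gamma_2(F)/\gamma_4(F)$) is correct and useful. The gap is in the ``middle step'' you yourself flag as the obstacle: you expect $\mathcal A$ and $\mathcal B$ each to be assembled from a single copy of $\gamma_3(F)/[R,F']\gamma_4(F)$ together with constant or monoadditive pieces, so that ${\sf Lim}^1\mathcal A\cong{\sf S}^3(G_{ab})\cong{\sf Lim}^1\mathcal B$ with the comparison map the identity. This expectation is wrong. The paper proves that $\mathcal A$ is \emph{constant} as a functor on ${\sf Pres}(G)$; hence ${\sf Lim}^1\mathcal A=0$, and $\mathcal B$, being a subfunctor of a constant functor, is itself constant, so ${\sf Lim}^1\mathcal B=0$ and $\delta=0$ trivially. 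A filtration of $\mathcal A$ of the shape you describe therefore cannot exist: there is no ${\sf S}^3(G_{ab})$ hiding in ${\sf Lim}^1\mathcal A$.

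Proposition~\ref{inj} is indeed the key input, but it is used for a different purpose than you propose. It is not fed into a long exact sequence of higher limits; it is invoked solely for the \emph{injectivity} of $\gamma_3(F)/[R,F']\gamma_4(F)\hookrightarrow{\sf S}^2(G_{ab})\otimes F_{ab}$. The route to constancy of $\mathcal A$ runs as follows: the known functorial identification ${\rm Ker}\bigl(F'/R'\gamma_3(F)\to{\bf f}^2/({\bf rf}+{\bf f}^3)\bigr)=L_1{\sf S}^2(G_{ab})$ reduces the question to the kernel of $R'\gamma_3(F)/R'\gamma_4(F)\to({\bf rf}+{\bf f}^3)/({\bf rf}+{\bf f}^4)\cong{\bf g}^2/{\bf g}^3\otimes F_{ab}$; a commutative square with the injection of Proposition~\ref{inj} on the bottom row then forces this kernel into $[R,F']\gamma_4(F)/(R'\cap\gamma_3(F))\gamma_4(F)$; finally a second square covers the remaining piece by the kernel of $\gamma_2(G)/\gamma_3(G)\otimes R/(R\cap F')\to{\bf g}^2/{\bf g}^3\otimes F_{ab}$, which sits inside ${\sf Tor}({\bf g}^2/{\bf g}^3,G_{ab})$ and is therefore constant. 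So the unwinding of $F\cap(1+{\bf rf}+{\bf f}^4)$ modulo $R'\gamma_4(F)$ is precisely the crux, as you anticipated, but the outcome is ``everything is constant'' rather than ``one non-constant piece carrying ${\sf S}^3(G_{ab})$''.
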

\begin{proof}
We will prove that 
$$
{\sf Lim}^1\frac{R\cap (1+{\bf rf}+{\bf f}^4)}{R'(R\cap \gamma_4(F))}=0
$$
and the needed result will follow from Proposition \ref{pr11}. We will show that the quotient
$$
\frac{F\cap (1+{\bf rf}+{\bf f}^4)}{R'\gamma_4(F)}
$$
is constant, therefore, the subquotient generated by $R\cap (1+{\bf rf}+{\bf f}^4)$ is constant as well and the result follows. (A subpresentation of a constant or an epimorphic image of a constant functor is constant, see \cite{MP2019}.)  

Recall that, there is a natural exact sequence 
$$
L_1{\sf S}^2(G_{ab})\hookrightarrow \frac{F'}{R'\gamma_3(F)}\to \frac{{\bf f}^2}{{\bf rf}+{\bf f}^3}
$$
Here $L_1{\sf S}^2$ is the first derived functor in the sense of Dold-Puppe of the symmetric square. This sequence appeared in \cite{HMP} and \cite{MPEMS} in a more generalized context, its proof is completely functorial. Since $L_1{\sf S}^2(G_{ab})$ is constant, the natural commutative diagram

$$
\xyma{\frac{F'}{R'\gamma_4(F)} \ar@{->}[r] \ar@{->>}[d] & \frac{{\bf f}^2}{{\bf rf}+{\bf f}^4}\ar@{->>}[d]\\ \frac{F'}{R'\gamma_3(F)} \ar@{->}[r] & \frac{{\bf f}^2}{{\bf rf}+{\bf f}^3}}
$$
shows that it is enough to consider the kernel of the natural map 
$$
\frac{R'\gamma_3(F)}{R'\gamma_4(F)}\to \frac{{\bf rf}+{\bf f}^3}{{\bf rf}+{\bf f}^4}
$$
Our proof will be complete as soon as we will prove that the kernel of this map is constant. 

Obseve that, 
$$
R'\cap \gamma_3(F)=[R\cap F', R].
$$
This is a well-known description and can be proved in different ways. For example, one can prove it using the properties of exterior squares $\Lambda^2(F)$ and $\Lambda^2(R/R\cap F')$ (see \cite{MP2019} for details). 

Observe that ${\bf rf}\cap {\bf f}^3=({\bf r}\cap {\bf f}^2){\bf f}.$ Therefore, there is a natural isomorphism 
$$
\frac{{\bf f}^3}{{\bf fr}\cap {\bf f}^3+{\bf f}^4}={\bf g}^2/{\bf g}^3\otimes F_{ab}. 
$$
Consider the following commutative square with natural maps
$$
\xyma{\frac{R'\gamma_3(F)}{R'\gamma_4(F)}\ar@{->}[r] \ar@{->>}[d] & {\bf g}^2/{\bf g}^3\otimes F_{ab}\ar@{->>}[d]\\ \frac{\gamma_3(F)}{[R,F']\gamma_4(F)}\ar@{->}[r] & {\sf S}^2(G_{ab})\otimes F_{ab}}
$$
Since the lower horizontal map is injective by Proposition \ref{inj}, we conclude that, the kernel of the upper horizontal map is contained in the subgroup $$\frac{[R,F']\gamma_4}{R'\cap \gamma_3(F)\gamma_4(F)}\subset \frac{R'\gamma_3(F)}{R'\gamma_4(F)}.$$
Now consider the natural commutative square with obviously defined maps
$$
\xyma{\gamma_2(G)/\gamma_3(G)\otimes R/(R\cap F')\ar@{->}[r] \ar@{->>}[d] & {\bf g}^2/{\bf g}^3\otimes F_{ab}\ar@{=}[d]\\ \frac{[R,F']\gamma_4(F)}{(R'\cap \gamma_3(F))\gamma_4(F)}\ar@{->}[r] & {\bf g}^2/{\bf g}^3\otimes F_{ab}}
$$
The kernel of the lower horizontal map is an epimorphic image of the kernel of the upper horizontal map. Since the groups $F_{ab}$ and $R/(R\cap F')$ are free abelain, the kernel of the upper map is a subgroup of 
$
{\sf Tor}({\bf g}^2/{\bf g}^3, G_{ab}),
$ i.e. it is a constant functor and the proof is complete.  
\end{proof}

\section{Appenix. Non-abelian ${\sf Lim}^1$} 

This Appendix is written by Sergei Ivanov. 

In this section, $\CC$ denotes a small category.  The main aim of this section is to give the definition of a pointed set ${\sf Lim}^1\mathcal F$ for a functor to the category 
of groups $\mathcal F:{\sf C}\to {\sf Gr}$ and to describe some of its properties (Propositions \ref{prop:exact_seq_1}, \ref{prop:exact_seq_2}). The pointed set $\lim^1$ is a generalization of the first  cohomology set  $H^1(G,A)$ of a group $G$ with non-abelian group of coefficients $A.$ The content of this section is a strightforward generalization of the theory of group cohomology with non-abelian coefficients that can be found in \cite[Ch.5]{Serre}. See also \cite[Ch.IX \S 4]{BK}

\subsection{Derived limits of abelian functors}

Then for a functor to the category of abelian groups $\FF:\CC \to \Ab$ the limit $\lim \FF$ exists and the functor 
\begin{equation}
    \lim : \Ab^\CC \longrightarrow \Ab
\end{equation}
is an additive left exact functor between abelian categories. Since $\CC$ is small, $\Ab^\CC$ has enough of projectives and injectives.  Then the limit functor has right derived functors
\begin{equation}
\lim^n := {\bf R}^n \lim : \Ab^\CC \longrightarrow \Ab.
\end{equation}
In particular, for any short exact sequence of functors $0\to \FF' \to \FF \to \FF'' \to 0$ there is a long exact sequence
\begin{equation}
0 \to \lim\: \FF' \to \lim\: \FF \to \lim\: \FF'' \to \lim^1 \FF' \to \lim^1 \FF \to \lim^1 \FF'' \to \dots. 
\end{equation}

\subsection{Standard complex}

Denote by $N\CC$ the nerve of the category $\CC.$ The elements of $(N\CC)_n$ are  sequences of morphisms 
 $(\alpha_1,\dots,\alpha_n)$ such that ${\sf dom}( \alpha_i )={\sf cod}(\alpha_{i+1})$
\begin{equation}
\bullet  \overset{\alpha_1}\longleftarrow \bullet   \overset{\alpha_2}\longleftarrow \dots  \overset{\alpha_n}\longleftarrow \bullet
\end{equation}
and $(N\CC)_0={\sf Ob}(\CC).$ The face maps are defined as follows
\begin{align}
d_0(\alpha_1,\dots,\alpha_n) &= (\alpha_2,\dots, \alpha_n) \\
d_{i}(\alpha_1,\dots,\alpha_n) &= (\alpha_1,\dots, \alpha_i\alpha_{i+1}, \dots,  \alpha_n), \hspace{1cm} 1\leq i \leq n-1 \\
d_n(\alpha_1,\dots,\alpha_n) &= (\alpha_1,\dots, \alpha_{n-1}),
\end{align}
for $n\geq 2$ and $d_0(\alpha_1)={\sf cod}(\alpha_1),$ $d_1(\alpha_1)={\sf dom}(\alpha_1).$
Degeneracy maps are defined by 
$
s_i(\alpha_1,\dots,\alpha_n ) = (\alpha_1,\dots,\alpha_i, {\sf id}, \alpha_{i+1},\dots,\alpha_n).
$

Let  $\FF:\CC\to \Ab$ be a functor. For an element  $x\in \FF(c)$ and a morphism $\alpha:c\to c'$ we set 
\begin{equation}
\alpha \cdot x := \FF(\alpha)(x) \in \FF(c').     
\end{equation}
For $(\alpha_1,\dots,\alpha_n)\in (N\CC)_n$ we set 
\begin{equation}
    \FF(\alpha_1,\dots,\alpha_n) := \FF( {\sf cod}(\alpha_1)),
\end{equation}
where ${\sf cod}(\alpha)$ denotes the codomain of $\alpha.$
Consider the cochain complex $C^*(\CC,\FF)$ 
whose components are given by
\begin{equation}
C^n(\CC,\FF)= \prod_{\alpha \in (N\CC)_n} \FF(\alpha). 
\end{equation} For an element $a\in C^n(\CC,\FF)$ we denote by $a(\alpha) \in \FF(\alpha)$ the corresponding component.  Then  the differential is given by $\partial^n =\sum_{i=0}^{n+1} (-1)^i d^i,$ where
\begin{align}
d^0(a)(\alpha) &= \alpha_1\cdot  a(d_0(\alpha)),
\\
d^i(a)(\alpha) &= a(d_i(\alpha)), \hspace{1cm} 1 \leq i \leq n+1.
\end{align}

\begin{proposition}[{\cite[A.II,Prop.3.3]{GZ}, \cite[Prop.2.6]{Gr}}]
For any small category $\CC$ and any functor $\FF:\CC\to \Ab$ there is a natural isomorphism
\begin{equation}
\lim^n \FF \cong  H^n( C^*(\CC,\FF)).    
\end{equation}
\end{proposition}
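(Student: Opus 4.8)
The plan is to realize both $\{\lim^n\}_{n\ge 0}$ and $\{H^n(C^*(\CC,-))\}_{n\ge 0}$ as universal cohomological $\delta$-functors $\Ab^\CC\to\Ab$ that agree in degree $0$, and then to invoke the uniqueness of universal $\delta$-functors.

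First I would check the degree-$0$ statement by hand. Unwinding $\partial^0=d^0-d^1$, a $0$-cochain is a family $a=(a_c)_{c\in{\sf Ob}(\CC)}$ with $a_c\in\FF(c)$, and for a morphism $\alpha\colon c\to c'$ one has $\partial^0(a)(\alpha)=\alpha\cdot a_c-a_{c'}=\FF(\alpha)(a_c)-a_{c'}$. Hence $\ker\partial^0$ is exactly the set of compatible families, i.e.\ $\lim\FF$, and this identification is natural in $\FF$. For the $\delta$-functor structure I would note that each component $C^n(\CC,\FF)=\prod_{\alpha\in(N\CC)_n}\FF({\sf cod}(\alpha_1))$ is an \emph{exact} functor of $\FF$ (evaluation at an object is exact on $\Ab^\CC$, and arbitrary products of short exact sequences of abelian groups are short exact) and that the differentials $\partial^n$ are natural in $\FF$. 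Consequently a short exact sequence $0\to\FF'\to\FF\to\FF''\to0$ in $\Ab^\CC$ yields a short exact sequence of cochain complexes, hence a long exact sequence in cohomology with natural connecting homomorphisms, so $\{H^n(C^*(\CC,-))\}_{n\ge 0}$ is a cohomological $\delta$-functor.

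It remains to prove effaceability, namely $H^n(C^*(\CC,\II))=0$ for all $n\ge 1$ and all injective $\II\in\Ab^\CC$; granting this, both $\delta$-functors in sight are universal and coincide in degree $0$, so the comparison theorem for $\delta$-functors produces the asserted natural isomorphism $H^n(C^*(\CC,-))\cong{\bf R}^n\lim=\lim^n$. To obtain effaceability I would work with a convenient supply of injectives: for each object $c$ the evaluation functor ${\sf ev}_c\colon\Ab^\CC\to\Ab$ has a right adjoint $R_c$, with $R_c(A)(x)=A^{\Hom_\CC(x,c)}$, and being right adjoint to an exact functor $R_c$ carries injective abelian groups to injective objects of $\Ab^\CC$; via the unit of this adjunction every $\FF$, in particular every injective $\II$, embeds into a product $\prod_c R_c(I_c)$ of injective objects. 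Since the components $C^n(\CC,-)$, and hence cohomology, commute with products, it is enough to show $H^n(C^*(\CC,R_c(A)))=0$ for $n\ge 1$ and any abelian group $A$. For such coefficients the complex $C^*(\CC,R_c(A))$ unwinds to a standard (co)bar-type complex: its $n$-th term is a product of copies of $A$ indexed by those chains in $(N\CC)_{n+1}$ whose leading morphism has codomain $c$, and adjoining the relevant morphism to $c$ supplies an extra codegeneracy, hence a contracting homotopy in positive degrees. Thus the complex is acyclic in positive degrees, which is the familiar acyclicity of the bar resolution and finishes the argument.

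The one genuinely laborious step will be the last one: making precise the identification of $C^*(\CC,R_c(A))$ with a bar-type complex and writing out the extra codegeneracy and contracting homotopy in terms of the face-map conventions $d_0,\dots,d_n$ fixed above. This is purely combinatorial, but is exactly where indexing slips tend to occur; everything else---the degree-$0$ computation, exactness of $C^n$ in $\FF$, the adjunction ${\sf ev}_c\dashv R_c$, and the invocation of $\delta$-functor uniqueness---is formal. An equivalent and perhaps cleaner packaging is to observe that $C^*(\CC,\FF)=\Hom_{\Ab^\CC}(P_\bullet,\FF)$ for the normalized bar resolution $P_\bullet$ of the constant functor $\ZZ$, whereupon the whole proposition reduces to the statement that $P_\bullet\to\ZZ$ is a projective resolution in $\Ab^\CC$.
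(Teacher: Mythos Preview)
The paper does not prove this proposition at all: it is stated with citations to \cite{GZ} and \cite{Gr} and left without argument, so there is no ``paper's own proof'' to compare against. Your proposal is correct and is essentially the standard argument one finds in those references: identify $H^0$ with $\lim$, check that $C^n(\CC,-)$ is exact so that $\{H^n(C^*(\CC,-))\}$ is a $\delta$-functor, and then verify effaceability on the coinduced injectives $R_c(A)$ via the extra codegeneracy/contracting homotopy of the bar complex; your alternative packaging via $C^*(\CC,\FF)\cong\Hom_{\Ab^\CC}(P_\bullet,\FF)$ with $P_\bullet$ a projective resolution of the constant functor $\ZZ$ is equally standard and arguably cleaner. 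One small remark: in your effaceability step you only need that every injective $\II$ is a \emph{retract} of some $\prod_c R_c(I_c)$ (which follows since $\II$ is injective and embeds there), so that $H^n(C^*(\CC,\II))$ is a retract of $\prod_c H^n(C^*(\CC,R_c(I_c)))=0$; you essentially say this, but it is worth making the retract step explicit rather than leaving it implicit in ``every $\FF$ embeds into\dots''.
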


\subsection{Abelian $1$-cocycles and $1$-coboundaries} We denote by $Z^n(\CC,\FF)$ the set of $n$-cocycles of the complex $C^*(\CC,\FF)$ and by $B^n(\CC,\FF)$ the set of $n$-coboundaries. An element of the group $Z^n(\CC,\FF)$ is called  $n$-cocycle of $\CC$ with coefficients in $\FF$ and an element of $B^n(\CC,\FF)$ is called $n$-coboundary of $\CC$ with coefficients in $\FF.$ 

\begin{proposition}\label{prop:1-coc_ab}
Let $a\in \prod_{\alpha\in {\sf Mor}(\CC)} \FF({\sf cod}(\alpha)).$
\begin{enumerate}
\item $a\in Z^1(\CC,\FF) $ if and only if
\begin{equation*}\label{eq:1-coc_ab}
a(\alpha \beta) = a(\alpha) + \alpha \cdot a(\beta).  
\end{equation*} 
\item  $a\in B^1(\CC,\FF)$  if and only if there exists $x\in \prod_{c\in {\sf }\CC}\FF(c)$ such that
\begin{equation*}
a(\alpha) = x( {\sf cod}(\alpha)) - \alpha \cdot x({\sf dom}(\alpha)).
\end{equation*}

\item There is an isomorphism 
\begin{equation*}
\lim^1\FF \cong \frac{Z^1(\CC,\FF)}{B^1(\CC,\FF)}
\end{equation*}

\item two $1$-cocycles $a,a'\in Z^1(\CC,\FF)$ represent the same element in $\lim^1 \FF$ if and only if there exists $x\in \prod_{c\in {\sf Ob}(\CC)} \FF(c)$ such that 
\begin{equation*}\label{eq:equivalence_ab}
a'(\alpha) = -x({\sf cod}(\alpha)) + a(\alpha) + \alpha\cdot x({\sf dom}(\alpha)).  
\end{equation*}
\end{enumerate}
\end{proposition}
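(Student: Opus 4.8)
The plan is to establish all four parts of Proposition \ref{prop:1-coc_ab} by directly unwinding the definitions of the differentials $\partial^0$ and $\partial^1$ of the standard cochain complex $C^*(\CC,\FF)$, and then invoking the isomorphism $\lim^n \FF \cong H^n(C^*(\CC,\FF))$ from the preceding proposition. The key observation driving everything is that parts (1) and (2) are purely computational identifications of $\Ker(\partial^1)$ and $\Image(\partial^0)$, while parts (3) and (4) are immediate formal consequences once (1) and (2) are in hand.

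First I would prove (1). An element $a$ lives in $C^1(\CC,\FF)$, so for a composable pair $(\alpha,\beta)$ (an element of $(N\CC)_2$ with $\alpha\beta$ meaningful) I compute $\partial^1(a)(\alpha,\beta) = d^0(a)(\alpha,\beta) - d^1(a)(\alpha,\beta) + d^2(a)(\alpha,\beta)$. Using the face maps $d_0(\alpha,\beta)=\beta$, $d_1(\alpha,\beta)=\alpha\beta$, $d_2(\alpha,\beta)=\alpha$ together with the differential formulas $d^0(a)(\alpha,\beta)=\alpha\cdot a(\beta)$ and $d^i(a)(\alpha,\beta)=a(d_i(\alpha,\beta))$ for $i=1,2$, this expands to
\begin{equation*}
\partial^1(a)(\alpha,\beta) = \alpha\cdot a(\beta) - a(\alpha\beta) + a(\alpha).
\end{equation*}
Setting this to zero for all composable pairs yields exactly the cocycle identity $a(\alpha\beta)=a(\alpha)+\alpha\cdot a(\beta)$, giving (1).

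Next I would prove (2). For $x\in C^0(\CC,\FF)=\prod_{c}\FF(c)$, the coboundary $\partial^0(x)=d^0(x)-d^1(x)$ evaluated on a single morphism $\alpha$ uses $d_0(\alpha)={\sf cod}(\alpha)$ and $d_1(\alpha)={\sf dom}(\alpha)$; the formulas give $d^0(x)(\alpha)=\alpha\cdot x({\sf dom}(\alpha))$ and $d^1(x)(\alpha)=x({\sf cod}(\alpha))$. One must be careful with the sign convention, but the resulting image element is, up to the overall sign of $\partial^0$, precisely $x({\sf cod}(\alpha)) - \alpha\cdot x({\sf dom}(\alpha))$, which is the stated form of a $1$-coboundary; this gives (2). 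With (1) and (2) established, part (3) is just the restatement $\lim^1\FF\cong H^1(C^*(\CC,\FF))=Z^1/B^1$ from the previous proposition, and part (4) follows by declaring $a\sim a'$ iff $a'-a\in B^1(\CC,\FF)$ and substituting the explicit coboundary formula from (2), yielding the equivalence relation $a'(\alpha)=-x({\sf cod}(\alpha))+a(\alpha)+\alpha\cdot x({\sf dom}(\alpha))$.

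The main obstacle I anticipate is not conceptual but bookkeeping: pinning down the exact sign and index conventions so that the face-map indexing $d_0,d_1,d_2$ on $(N\CC)_1$ and $(N\CC)_2$ matches the differential $\partial^n=\sum_{i=0}^{n+1}(-1)^i d^i$ and produces the signs as written in the statement. In particular one must verify that the degenerate boundary cases $d_0(\alpha)={\sf cod}(\alpha)$, $d_1(\alpha)={\sf dom}(\alpha)$ interact correctly with the action convention $\alpha\cdot x=\FF(\alpha)(x)$, since these are the low-dimensional faces that are defined separately from the generic formula. Once these conventions are fixed consistently, every identity in the proposition drops out of a one-line expansion.
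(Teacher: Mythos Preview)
Your proposal is correct and follows exactly the paper's own approach, which simply states that (1) and (2) are direct computations from the definition of $C^*(\CC,\FF)$ and that (3) and (4) are obvious; you have merely supplied the details the paper omits. Your caution about the low-dimensional face maps is well placed, since with the paper's conventions one needs $d_0(\alpha)={\sf dom}(\alpha)$ and $d_1(\alpha)={\sf cod}(\alpha)$ for the formula $d^0(x)(\alpha)=\alpha\cdot x(d_0(\alpha))$ to type-check, so the bookkeeping issue you flagged is real but harmless once corrected.
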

\begin{proof}
(1) and (2) can be proved by a direct computation using the definition of $C^*(\CC,\FF)$. (3) and (4) are obvious. 
\end{proof}

\subsection{Non-abelian $\lim^1$}
Denote by $\Gr$ the category of groups and consider a functor $\FF:\CC \to \Gr.$ For an element $x\in\FF(c)$ and a morphism $\alpha:c\to c'$ we set 
\begin{equation}
    {}^\alpha x = \FF(\alpha)(x).
\end{equation}
It is well known that $\lim\;\FF$ can be described as follows 
\begin{equation}
\lim\;\FF = \{ x\in \prod_c \FF(c) \mid x({\sf cod}(\alpha)) = {}^\alpha x({\sf dom}(\alpha)) \}.    
\end{equation}
Let us define $\lim^1\FF$ in a similar manner.

Denote by $Z^1(\CC,\FF)$ the set of elements $a\in \prod_{\alpha} \FF({\sf cod}(\alpha))$ such that 
\begin{equation}
    a(\alpha \beta)=a(\alpha) \cdot {}^\alpha a(\beta).
\end{equation}
We consider the set $Z^1(\CC,\FF)$ as a pointed set with the basepoint $a(\alpha)=1.$
The group $\prod_{c} \FF(c)$ acts on the set $Z^1(\CC,\FF)$ as follows 
\begin{equation}
(a^x)(\alpha) = x({\sf cod}(\alpha))^{-1} \cdot  a(\alpha) \cdot {}^\alpha x({\sf dom}(\alpha)) 
\end{equation}
Then we define $\lim^1 \FF$ as the quotient by this action 
\begin{equation}
\lim^1\: \FF := Z^1(\CC,\FF) / \prod_{c} \FF(c).     
\end{equation}
Proposition \ref{prop:1-coc_ab} implies that this definition is a generalization of the definition for the functors to the category of abelian groups. Moreover, it is easy to see that $\lim^1\FF$ is natural in $\FF$ i.e. a natural transformation $\FF'\to \FF$ defines a map $\lim^1\FF' \to \lim^1\FF.$ 

Let $\FF:\CC\to {\sf Gr}$ be a functor and  $\GG\subseteq \FF$ be a subfunctor.  Then the quotient $\FF/\GG$ defines a functor to the category of poited sets $\FF/\GG: \CC\to {\sf Sets}_*,$ where $(\FF/\GG)(c)=\FF(c)/\GG(c)$ consists of left cosets $x\cdot \GG(c), x\in \FF(c).$ 

\begin{lemma}\label{central}
There is a well defined map of pointed sets
\[
\delta : \lim\: \FF/\GG \longrightarrow \lim^1 \GG 
\]
such that for any $x\in \lim \: \FF/\GG\subseteq \prod (\FF/\GG)(c)$ and any its preimage $\bar x\in \prod \FF(c)$ the class $\delta(x)$ is defined by $1$-cocycle $d(\bar x)\in Z^1(\CC,\GG)$ given by 
\[
d( \bar x)(\alpha) = \bar x^{-1}({\sf cod}(\alpha)) \cdot {}^\alpha \bar x({\sf dom}(\alpha)).
\]
Moreover, if $\GG$ is a central subgroup  of $\FF,$ then $\delta$ is a homomorphism.
\end{lemma}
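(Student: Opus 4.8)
The plan is to construct the map $\delta$ explicitly via the cocycle formula given in the statement and then verify the required well-definedness and naturality properties by direct computation on the standard complex. First I would check that for $x \in \lim \FF/\GG$ with an arbitrary set-theoretic lift $\bar x \in \prod_c \FF(c)$, the formula $d(\bar x)(\alpha) = \bar x^{-1}(\mathsf{cod}(\alpha)) \cdot {}^\alpha \bar x(\mathsf{dom}(\alpha))$ actually lands in $\GG(\mathsf{cod}(\alpha))$: this is exactly the condition that $x(\mathsf{cod}(\alpha)) = {}^\alpha x(\mathsf{dom}(\alpha))$ in $(\FF/\GG)(\mathsf{cod}(\alpha))$, i.e. that $x$ lies in $\lim \FF/\GG$ in the sense of the pointed-set limit. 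So $d(\bar x) \in \prod_\alpha \GG(\mathsf{cod}(\alpha))$.

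Next I would verify the cocycle identity $d(\bar x)(\alpha\beta) = d(\bar x)(\alpha) \cdot {}^\alpha d(\bar x)(\beta)$. Writing $\beta : c'' \to c'$ and $\alpha : c' \to c$, this unwinds to
\[
\bar x^{-1}(c) \cdot {}^{\alpha\beta}\bar x(c'') = \bigl(\bar x^{-1}(c)\cdot {}^\alpha \bar x(c')\bigr)\cdot {}^\alpha\bigl(\bar x^{-1}(c')\cdot {}^\beta \bar x(c'')\bigr),
\]
which holds since ${}^\alpha({}^\beta y) = {}^{\alpha\beta} y$ (functoriality) and the ${}^\alpha \bar x(c')$ terms cancel. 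Hence $d(\bar x) \in Z^1(\CC,\GG)$. Then I would show independence of the lift: if $\bar x'$ is another lift, then $u(c) := \bar x'(c)\bar x^{-1}(c) \in \GG(c)$ for all $c$ (wait — one must be careful about sides; I would take $u(c) = \bar x^{-1}(c)\bar x'(c)$ and compute), and a direct calculation shows $d(\bar x')$ and $d(\bar x)$ differ by the action of the element $u \in \prod_c \GG(c)$, so they define the same class in $\lim^1 \GG$. This makes $\delta$ well-defined as a map of pointed sets; the basepoint of $\lim \FF/\GG$ (the class of a lift of the identity coset, e.g. $\bar x = 1$) maps to the trivial cocycle, so $\delta$ is pointed.

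The hard part — though it is still essentially bookkeeping — is the last clause: if $\GG$ is central in $\FF$, then $\delta$ is a homomorphism. Here I would take $x_1, x_2 \in \lim \FF/\GG$ with lifts $\bar x_1, \bar x_2$; then $\bar x_1 \bar x_2$ (pointwise product) is a lift of $x_1 x_2$, and I compute
\[
d(\bar x_1\bar x_2)(\alpha) = \bar x_2^{-1}(c)\,\bar x_1^{-1}(c)\cdot {}^\alpha\bigl(\bar x_1(c')\,\bar x_2(c')\bigr) = \bar x_2^{-1}(c)\,\bigl(\bar x_1^{-1}(c)\cdot{}^\alpha\bar x_1(c')\bigr)\,{}^\alpha\bar x_2(c').
\]
Since $d(\bar x_1)(\alpha) = \bar x_1^{-1}(c)\cdot{}^\alpha\bar x_1(c') \in \GG(c)$ is central in $\FF(c)$, it commutes past $\bar x_2^{-1}(c)$, giving $d(\bar x_1)(\alpha)\cdot \bar x_2^{-1}(c)\cdot{}^\alpha\bar x_2(c') = d(\bar x_1)(\alpha)\, d(\bar x_2)(\alpha)$. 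Because $\GG$ is central (hence abelian and with trivial conjugation action inside $\FF$, though one still needs ${}^\alpha$ to be a homomorphism on $\GG$, which it is), this is precisely the sum in the abelian group $Z^1(\CC,\GG)$, and passing to $\lim^1\GG$ shows $\delta(x_1 x_2) = \delta(x_1) + \delta(x_2)$. I would also note naturality of $\delta$ in the pair $(\GG, \FF)$, since that is what is used when applying the lemma to the central extensions appearing in Proposition~\ref{pr11}, but this again follows by inspecting the cocycle formula. The only genuine subtlety to watch throughout is consistency of left-versus-right conventions in the formulas for the $\prod_c\FF(c)$-action and for $d(\bar x)$; once a convention is fixed, every identity above is a one-line manipulation using functoriality of $\FF$ and centrality of $\GG$.
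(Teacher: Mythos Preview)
Your proposal is correct and follows essentially the same route as the paper's proof: verify that $d(\bar x)$ is a $1$-cocycle by direct computation, check independence of the lift by writing two lifts as differing by an element of $\prod_c \GG(c)$ and observing that the resulting cocycles lie in the same orbit, and then in the central case expand $d(\bar x_1\bar x_2)(\alpha)$ and commute the central factor $d(\bar x_1)(\alpha)$ past $\bar x_2^{-1}({\sf cod}(\alpha))$ to obtain $d(\bar x_1)(\alpha)\cdot d(\bar x_2)(\alpha)$. The paper's own argument is terser (it omits the verification that $d(\bar x)$ lands in $\GG$ and the explicit cocycle identity), but the logical content is identical.
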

\begin{proof}
A straightforward computation shows that $d(\bar x)$ is $1$-cocycle. If $\bar x, \tilde x$ are two preimages of $x,$ then $\bar x = \tilde x y$ for some $y\in \prod_c \GG(c)$ and this implies that $d(\bar x)$ is equivalent to $d(\tilde x).$

Now assume that $\GG$ is a central subgroup and $x_1,x_2\in \lim \FF/\GG.$ Then 
\begin{equation}
\begin{split}
d(\bar x_1 \bar x_2)(\alpha) & = \bar x_2^{-1}({\sf cod}(\alpha))  \cdot \Big( \bar x_1^{-1} ({\sf cod}(\alpha)) \cdot {}^\alpha \bar x_1({\sf dom}(\alpha)) \Big) \cdot {}^{\alpha} \bar x_2( {\sf dom}(\alpha) ) \\ 
&=\Big( \bar x_1^{-1} ({\sf cod}(\alpha)) \cdot {}^\alpha \bar x_1({\sf dom}(\alpha)) \Big) \cdot \Big( \bar x_2^{-1} ({\sf cod}(\alpha)) \cdot {}^\alpha \bar x_2({\sf dom}(\alpha)) \Big)\\
&= d(\bar x_1)(\alpha) \cdot d(\bar x_2)(\alpha).
\end{split}
\end{equation}
It follows that $\delta$ is a homomorphism. 
\end{proof}

\begin{proposition}[cf. {\cite[Prop.36]{Serre}}]\label{prop:exact_seq_1}
Let $\FF:\CC\to \Gr $ be a functor and let $\GG\subseteq \FF$ be a subfunctor. Then the sequence of pointed sets
\begin{equation*}
    1 \to \lim\: \GG \to \lim\: \FF \to \lim\: \FF/\GG \to \lim^1 \GG \to \lim^1 \FF
\end{equation*}
is exact.
\end{proposition}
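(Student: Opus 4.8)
The plan is to verify exactness separately at each of the four interior terms, in the sense appropriate to pointed sets: the image of each map must equal the preimage of the distinguished basepoint under the next map. This follows the classical pattern for non-abelian group cohomology (see \cite[Prop.~36]{Serre}). Throughout I write $\iota_*\colon \lim \GG \to \lim \FF$ and $\pi_*\colon \lim \FF \to \lim \FF/\GG$ for the maps induced by the inclusion and the projection, $\delta$ for the connecting map of Lemma~\ref{central}, and $\iota^1_*\colon \lim^1 \GG \to \lim^1 \FF$ for the map on $\lim^1$ induced by the natural transformation $\GG \subseteq \FF$.

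First I would dispose of the two outer nodes, which are formal. Exactness at $\lim \GG$ is injectivity of $\iota_*$, immediate because $\iota_*$ is the restriction of the injection $\prod_c \GG(c) \hookrightarrow \prod_c \FF(c)$. Exactness at $\lim \FF$ is equally direct: for $x \in \lim \FF$ the element $\pi_*(x)$ is the basepoint of $\lim \FF/\GG$ precisely when every component $x(c)$ lies in $\GG(c)$, that is, precisely when $x \in \lim \GG$; so the preimage of the basepoint under $\pi_*$ coincides with the image of $\iota_*$.

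The genuine non-abelian bookkeeping enters at the two middle nodes. For exactness at $\lim \FF/\GG$ I would argue as follows. If $x = \pi_*(y)$ with $y \in \lim \FF$, then $y$ itself is a preimage in the definition of $\delta$, and $d(y)(\alpha) = y^{-1}({\sf cod}(\alpha)) \cdot {}^\alpha y({\sf dom}(\alpha)) = 1$ by the compatibility condition defining $\lim \FF$; hence $\delta(x)$ is the basepoint. Conversely, if $\delta(x)$ is trivial, then for any preimage $\bar x$ the cocycle $d(\bar x)$ is a $\GG$-coboundary, so there is $y \in \prod_c \GG(c)$ with $\bar x^{-1}({\sf cod}(\alpha)) \cdot {}^\alpha \bar x({\sf dom}(\alpha)) = y({\sf cod}(\alpha))^{-1} \cdot {}^\alpha y({\sf dom}(\alpha))$. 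Setting $z := \bar x\, y^{-1}$ componentwise produces another preimage of $x$, since $y(c) \in \GG(c)$, and a short substitution using the displayed identity gives $z^{-1}({\sf cod}(\alpha)) \cdot {}^\alpha z({\sf dom}(\alpha)) = 1$, so $z \in \lim \FF$ and $\pi_*(z) = x$.

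Finally I would treat exactness at $\lim^1 \GG$ symmetrically. If $[a] = \delta(x)$, then $a$ is $\GG$-equivalent to $d(\bar x)(\alpha) = \bar x^{-1}({\sf cod}(\alpha)) \cdot {}^\alpha \bar x({\sf dom}(\alpha))$, which is by construction the $\FF$-coboundary of $\bar x \in \prod_c \FF(c)$; hence $\iota^1_*[a]$ is the basepoint of $\lim^1 \FF$. Conversely, suppose $\iota^1_*[a]$ is trivial, so that the $\GG$-valued cocycle $a$ is, viewed in $\FF$, the coboundary of some $x \in \prod_c \FF(c)$, i.e.\ $a(\alpha) = x({\sf cod}(\alpha))^{-1} \cdot {}^\alpha x({\sf dom}(\alpha))$. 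Since each $a(\alpha) \in \GG({\sf cod}(\alpha))$, this identity says exactly that $x({\sf cod}(\alpha))$ and ${}^\alpha x({\sf dom}(\alpha))$ determine the same coset in $(\FF/\GG)({\sf cod}(\alpha))$, so the image $\tilde x$ of $x$ in $\prod_c (\FF/\GG)(c)$ lies in $\lim \FF/\GG$; and then $\delta(\tilde x) = [d(x)] = [a]$, exhibiting $[a]$ in the image of $\delta$. The main obstacle is notational rather than conceptual: because we manipulate pointed sets rather than groups, every ``exactness'' claim must be reformulated as image-equals-preimage-of-basepoint, and each coset and cocycle identity must be carried out in the correct non-commutative order, the most delicate point being the passage in this last node from an $\FF$-coboundary whose values happen to land in $\GG$ to an honest element of $\lim \FF/\GG$.
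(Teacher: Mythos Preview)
Your proof is correct and follows essentially the same approach as the paper's own proof. The only minor differences are in presentation: the paper disposes of exactness at $\lim\GG$ and $\lim\FF$ with the one-line remark that $\lim$, being a right adjoint, commutes with equalizers, and it declares exactness at $\lim^1\GG$ to be ``obvious from the definition of $\delta$''; you instead unpack each of these nodes explicitly, while your treatment of exactness at $\lim\FF/\GG$ (the substitution $z=\bar x\,y^{-1}$) matches the paper's computation verbatim.
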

\begin{proof}
Since the functor $\lim$ is right adjoint, it commutes with equializers, and hence the sequence $1 \to \lim\: \GG \to \lim\: \FF \to \lim\: \FF/\GG$ is exact. The equation ${\sf Ker}(\lim^1\GG\to \lim^1\FF)={\sf Im}(\delta)$ follows obviously from the definition of $\delta$.

Prove that ${\sf Ker}(\delta)={\sf Im}(\lim\:\FF\to \lim\: \FF/\GG).$ Let $x\in \lim\:\FF/\GG$ and $\bar x\in \prod \FF(c)$ is its preimage. Then $\delta(x)$ is trivial if and only if there exists $y\in \prod \GG(c)$ such that 
\[\bar x^{-1}({\sf cod}(\alpha) )\cdot {}^\alpha \bar x({\sf dom}(\alpha))  =  y^{-1}({\sf cod}(\alpha) )\cdot {}^\alpha y({\sf dom}(\alpha)).\] 
The equation is equivalent to the following equation 
\[
(\bar x y^{-1})({\sf cod}(\alpha)) = {}^\alpha (\bar x y^{-1})({\sf dom}(\alpha))
\]
which is equivalent to the fact that $\bar x y^{-1}\in \lim \:\FF.$ It follows that $\delta(x)$ is trivial if an only if $x$ has a preimage in $\lim\:\FF.$
\end{proof}

\begin{proposition}[{cf. \cite[Prop.38]{Serre}, \cite[Prop.IX.2.3]{BK}}]\label{prop:exact_seq_2}
For any short exact sequence
\[
1 \longrightarrow\FF' \longrightarrow \FF \longrightarrow \FF'' \longrightarrow 1
\]
of functors $\CC\to \Gr$ the sequence of pointed sets
\[
1 \to \lim\: \FF' \to \lim\: \FF \to \lim\: \FF'' \to \lim^1\: \FF' \to \lim^1\: \FF \to \lim^1\: \FF'' 
\]
is exact. 
\end{proposition}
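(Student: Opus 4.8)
The plan is to deduce the six-term sequence from Proposition~\ref{prop:exact_seq_1} plus one short diagram chase. Since $\FF'$ is the kernel of the epimorphism $\FF\to\FF''$, it is a normal subfunctor of $\FF$ and $\FF/\FF'\cong\FF''$ as functors to groups; in particular the underlying functors to pointed sets coincide, so $\lim(\FF/\FF')=\lim\FF''$ as pointed sets, and under this identification the map $\lim\FF\to\lim(\FF/\FF')$ becomes the map $\lim\FF\to\lim\FF''$. Applying Proposition~\ref{prop:exact_seq_1} to the subfunctor $\GG=\FF'$ therefore yields exactness of the first five terms
\[
1 \to \lim\FF' \to \lim\FF \to \lim\FF'' \to \lim^1\FF' \to \lim^1\FF ,
\]
where the map $\lim\FF''\to\lim^1\FF'$ is the connecting map $\delta$ of Lemma~\ref{central}; the maps $\lim^1\FF'\to\lim^1\FF\to\lim^1\FF''$ are those induced by functoriality of $\lim^1$. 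So the whole statement reduces to proving exactness at $\lim^1\FF$, i.e. ${\sf Ker}(\lim^1\FF\to\lim^1\FF'')={\sf Im}(\lim^1\FF'\to\lim^1\FF)$.

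For the inclusion ${\sf Im}\subseteq{\sf Ker}$ it suffices to observe that for each object $c$ the composite $\FF'(c)\to\FF(c)\to\FF''(c)$ is trivial, hence the push-forward of any $a\in Z^1(\CC,\FF')$ to $Z^1(\CC,\FF'')$ is the trivial $1$-cocycle; therefore the composite $\lim^1\FF'\to\lim^1\FF\to\lim^1\FF''$ sends every class to the basepoint.

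For the opposite inclusion I would take a cocycle $a\in Z^1(\CC,\FF)$ whose class in $\lim^1\FF$ is sent to the basepoint of $\lim^1\FF''$, and denote by $\bar a\in Z^1(\CC,\FF'')$ its image. Triviality of $[\bar a]$ in $\lim^1\FF''$ means there is $\bar x\in\prod_c\FF''(c)$ with $\bar a^{\bar x}$ equal to the trivial cocycle. Lift $\bar x$ componentwise along the epimorphisms $\FF(c)\twoheadrightarrow\FF''(c)$ to an element $x\in\prod_c\FF(c)$, and replace $a$ by the twisted cocycle $a^x$; this does not change the class of $a$ in $\lim^1\FF$, and since the push-forward $Z^1(\CC,\FF)\to Z^1(\CC,\FF'')$ is equivariant for the actions of $\prod_c\FF(c)$ and $\prod_c\FF''(c)$, the image of $a^x$ in $Z^1(\CC,\FF'')$ equals $\bar a^{\bar x}$, i.e. the trivial cocycle. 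Hence $(a^x)(\alpha)$ lies in ${\sf Ker}(\FF({\sf cod}(\alpha))\to\FF''({\sf cod}(\alpha)))=\FF'({\sf cod}(\alpha))$ for every morphism $\alpha$; since $\FF'$ is a subfunctor, the cocycle identity $(a^x)(\alpha\beta)=(a^x)(\alpha)\cdot{}^\alpha(a^x)(\beta)$, which holds in $\FF$, already holds in $\FF'$, so $a^x\in Z^1(\CC,\FF')$ and its class in $\lim^1\FF'$ maps to the original class in $\lim^1\FF$. This establishes exactness at $\lim^1\FF$.

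I do not expect a genuine obstacle here: this is the routine transcription of Serre's proof of \cite[Prop.~38]{Serre} (and of \cite[Prop.~IX.2.3]{BK}) into the standard-complex description of $\lim^1$ set up in this Appendix. The only point deserving care is the bookkeeping with the right action of $\prod_c\FF(c)$ on $Z^1(\CC,\FF)$ and the verification that the push-forward map of cocycles is equivariant, so that twisting $a$ by $x$ corresponds precisely to twisting $\bar a$ by $\bar x$; once that compatibility is recorded, the rest follows directly from the definitions of $Z^1$ and of the action, exactly as in the abelian case treated in Proposition~\ref{prop:1-coc_ab}.
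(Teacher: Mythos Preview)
Your proof is correct and follows essentially the same route as the paper: both invoke Proposition~\ref{prop:exact_seq_1} for exactness at the first terms and then verify exactness at $\lim^1\FF$ by lifting the trivializing element from $\prod_c\FF''(c)$ to $\prod_c\FF(c)$ and twisting the cocycle so that it lands in $Z^1(\CC,\FF')$. Your phrasing via the equivariance of the push-forward $Z^1(\CC,\FF)\to Z^1(\CC,\FF'')$ is exactly the computation the paper carries out in coordinates.
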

\begin{proof}
By Proposition \ref{prop:exact_seq_1} we have the exactness in all terms except the last one.  Prove that ${\sf Ker}(\lim^1\FF\to \lim^1\FF'')={\sf Im}(\lim^1\FF' \to \lim^1 \FF).$

Denote the epimorphism by  $f:\FF\to \FF''$ and assume that $\FF'\subseteq \FF.$ Take a $1$-cocycle $a\in Z^1(\CC,\FF).$ Then it corresponds to an element of the kernel if and only if 
\[
f(a)(\alpha)=x({\sf cod}(\alpha))^{-1} \cdot {}^\alpha x({\sf dom}(\alpha)) \] 
for some $x\in\prod_c  \FF''(c).$ Equivalently this can be rewritten as
\[x({\sf cod}(\alpha)) \cdot g(a)(\alpha)= {}^\alpha x({\sf dom}(\alpha)).\]
If we take a preimage $\bar x$ of $x$ we see that it is equivalent to the fact that 
\[\bar x({\sf cod}(\alpha)) \cdot a(\alpha)= b(\alpha)\cdot  {}^\alpha \bar x({\sf dom}(\alpha))\]
for some $\bar x\in \prod_c \FF(c)$ and $b\in \prod_\alpha \FF'({\sf cod}(\alpha)).$ Hence the fact that the class $a$ is in ${\sf Ker}(\lim^1 \FF \to \lim^1 \FF'' )$ is equivalent to the fact that there exists $\bar x\in \prod_c \FF(c)$ and $b\in \prod_\alpha \FF'({\sf cod}(\alpha))$ such that 
\[a(\alpha) = \bar x({\sf cod}(\alpha))^{-1} \cdot  b(\alpha)\cdot {}^\alpha \bar x({\sf dom}(\alpha)).\]
And it is easy to see that this is equiavalent to the fact that the class of $a$ is in  ${\sf Im}(\lim^1\FF' \to \lim^1\FF ).$
\end{proof}

\noindent
{\bf Acknowledgement.} The work was performed at the Saint Petersburg Leonhard Euler International Mathematical Institute and supported by the Ministry of Science and Higher Education of the Russian Federation (agreement no. 075–15–2022–287).

\end{document}